\def\f12{\frac 1 2}
\def\ga{\gamma}
\def\ep{\epsilon}
\def\si{\sigma}
\def\om{\omega}
\def\Om{\Omega}
\def\S{\mathcal{S}} 
\def\Lb{\underline{L}}
\def\pa{\partial}
\def\les{\lesssim}
\def\B{\mathcal{B}}
\def\cL{{\mathcal L}}
\def\f12{\frac 1 2}
\newcommand{\vol}{\textnormal{vol}}
\newcommand{\nabb}{\mbox{$\nabla \mkern-13mu /$\,}}
\newtheorem{Thm}{Theorem}[section]
\newtheorem{Prop}{Proposition}[section]
\newtheorem{Lem}{Lemma}[section]
\newtheorem{Remark}{Remark}[section]
\theoremstyle{definition}
\begin{document}

\title{Uniform bound for solutions of semilinear wave equations in $\mathbb{R}^{1+3}$}
\date{}
\author{Shiwu Yang}
\maketitle
\begin{abstract}
We prove that solution of defocusing semilinear wave equation in $\mathbb{R}^{1+3}$ with pure power nonlinearity is uniformly bounded for all $\frac{3}{2}<p\leq 2$ with sufficiently smooth and localized data. The result relies on the $r$-weighted energy estimate originally introduced by Dafermos and Rodnianski. This appears to be the first result regarding the global asymptotic property for the solution with small power $p$ under 2.
\end{abstract}

\section{Introduction}
In this paper, we continue our study on the global pointwise behaviors for solutions to the energy subcritical defocusing semilinear wave equations
\begin{equation}
  \label{eq:NLW:semi:3d}
  \Box\phi=(-\pa_{tt}+\Delta)\phi=|\phi|^{p-1}\phi,\quad \phi(0, x)=\phi_0(x),\quad \pa_t\phi(0, x)=\phi_1(x)
\end{equation}
in $\mathbb{R}^{1+3}$ with small $p$ in the range $1<p\leq 2$.
For constant $\ga$, define the weighted energy norm of the initial data
\begin{align*}
  \mathcal{E}_{k,\ga}[\phi]=\sum\limits_{l\leq k}\int_{\mathbb{R}^3}(1+|x|)^{\ga+2l}(|\nabla^{l+1}\phi_0|^2+|\nabla^l \phi_1|^2)+(1+|x|)^{\ga}|\phi_0|^{p+1}dx.
\end{align*}
We prove in this paper that
\begin{Thm}
\label{thm:main}
Consider the defocusing semilinear wave equation \eqref{eq:NLW:semi:3d} with initial data $(\phi_0, \phi_1)$ such that $\mathcal{E}_{1, 0}[\phi]$ and $\mathcal{E}_{0, p-1}[\phi]$ are finite. Then for all $\frac{3}{2}<p\leq 2$,
the solution $\phi$ to the equation \eqref{eq:NLW:semi:3d} exists globally in time and is uniformly bounded in the following sense
\begin{equation}
\label{eq:phi:pt:Br:largep}
|\phi(t, x)|\leq C(\sqrt{\mathcal{E}_{1, 0}[\phi]}+(\mathcal{E}_{0, p-1}[\phi])^{\frac{p}{p+1}}),\quad \forall (t, x)\in \mathbb{R}^{1+3}
\end{equation}
for some constant $C$ depending only on $p$.
\end{Thm}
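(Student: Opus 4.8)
The plan is to combine the classical vector field method with the $r$-weighted energy hierarchy of Dafermos--Rodnianski, adapted to the flat background. Local well-posedness in the energy space is classical; since the energy $E[\phi](t)=\int_{\mathbb R^3}\f12(|\pa_t\phi|^2+|\nab\phi|^2)+\tfrac{1}{p+1}|\phi|^{p+1}\,dx$ is conserved and coercive, global existence in $H^1$ is immediate, so the whole content of the theorem is the uniform pointwise bound \eqref{eq:phi:pt:Br:largep}. I would foliate $\mathbb R^{1+3}$ by the constant-$t$ slices $\Si_\tau$ and by truncated outgoing null cones $\{t-r=\text{const}\}$, introduce null coordinates $u=\tfrac{t-r}{2}$, $v=\tfrac{t+r}{2}$, the radial unknown $\psi=r\phi$, and the rotation fields $\Om$, and record the null form of the equation $\pa_u\pa_v\psi=\lap\psi-r\,|\phi|^{p-1}\phi$. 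The argument is then set up as a bootstrap coupling (i) the uniform pointwise bound on $\phi$, (ii) the first-order energy flux of $\phi$ through the cones, and (iii) the $r^{p-1}$-weighted nonlinear flux; one assumes these controlled by a fixed multiple of $\cE_{1,0}$ and $\cE_{0,p-1}$ and improves the constant.

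The engine is the $r$-weighted energy estimate. Multiplying $\pa_u\pa_v\psi=\lap\psi-r|\phi|^{p-1}\phi$ by $r^{\ga-2}\pa_v\psi$, integrating over a slab between two time slices out to null infinity, and integrating by parts in the angular variables and in $v$, yields for $0<\ga\le 2$
\begin{align*}
\int_{C}r^{\ga-2}|\pa_v\psi|^2\,dv\,d\om &+ \int\!\!\int r^{\ga-1}\big(\ga|\nabb\psi|^2+(2-\ga)|\pa_v\psi|^2\big)\,du\,dv\,d\om \\
&\les\; \cE^{\mathrm{ini}}_{\ga} + \Big|\int\!\!\int r^{\ga-1}|\phi|^{p-1}\phi\,\pa_v\psi\,du\,dv\,d\om\Big|.
\end{align*}
Writing $\pa_v\psi=\phi+r\pa_v\phi$ and integrating the second piece by parts in $v$ turns the nonlinear error into a bulk term $\sim\int\!\!\int r^{\ga-1}|\phi|^{p+1}$, whose coefficient is nonnegative exactly when $\ga\le p-1$, plus a cone flux $\sim\int r^{\ga}|\phi|^{p+1}\,d\om\,dv$ which at $\ga=p-1$ is precisely of the type bounded by $\cE_{0,p-1}$. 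Hence the hierarchy is available for $0<\ga\le p-1$; feeding it together with the standard integrated local energy decay (Morawetz) estimate into a dyadic pigeonhole argument in $t$ gives a quantitative decay of the energy flux of $\phi$ through the truncated cones, with rate controlled by $p-1$, which closes item (ii), and the $\ga=p-1$ estimate closes item (iii).

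For the pointwise bound I would split into the wave zone $r\ge t/2$ and the interior $r\le t/2$. In the wave zone, the size of $\phi$ is recovered from the weighted fluxes above by the fundamental theorem of calculus along the null directions together with Sobolev embedding on the $2$-spheres $S_{t,r}$ (after commuting the equation with $\Om$), and the first-order weighted energy here contributes the term $\sqrt{\cE_{1,0}}$. In the interior one may use the Duhamel representation $\phi(t,x)=\phi_{\mathrm{lin}}(t,x)-\tfrac1{4\pi}\int_0^t(t-s)^{-1}\!\int_{|y-x|=t-s}|\phi|^{p-1}\phi\,dS(y)\,ds$: bounding $\phi_{\mathrm{lin}}$ by $\sqrt{\cE_{1,0}}$ as for the linear equation, and estimating the nonlinear integral by H\"older in the $L^{p+1}$ scale against the weighted nonlinear flux --- writing $|\phi|^{p}=(|\phi|^{p+1})^{\frac{p}{p+1}}$ and using the flux decay from item (ii) to absorb the $(t-s)^{-1}$ kernel --- produces exactly the contribution $(\cE_{0,p-1})^{\frac{p}{p+1}}$, the complementary exponent $\frac{1}{p+1}$ handling the geometric factors. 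Summing the two regions gives \eqref{eq:phi:pt:Br:largep}.

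The main obstacle, I expect, is precisely the smallness of $p$. Because the $r$-weight in the hierarchy cannot exceed $\ga=p-1<1$, only weak flux decay is available, and the nonlinear errors fed back into the estimates slab by slab --- governed morally by an integral of the form $\int_1^\infty\langle t\rangle^{-2(p-1)}\,dt$ --- are summable exactly when $p>\tfrac32$. The delicate part is therefore the bookkeeping showing, region by region, that the gain from the $r^{p-1}$-weighted flux strictly dominates the loss from the nonlinearity, the closing margin shrinking to zero both as $p\to\tfrac32^+$ and as $p\to2^-$ (the conformal-type endpoint, where the weighted nonlinear energy only barely controls the flux); one must also arrange the H\"older split in the pointwise step so that no weighted energies beyond $\cE_{1,0}$ and $\cE_{0,p-1}$ are needed, which is what produces the precise form of the bound.
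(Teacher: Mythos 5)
Your outline is essentially the strategy that works for $p>2$ --- and that the paper explicitly identifies as breaking down once $p\le 2$: the Dafermos--Rodnianski hierarchy on the outgoing cones $\{t-r=\mathrm{const}\}$ centered at the spatial origin, a pigeonhole argument to extract flux decay, and a Duhamel/H\"older estimate at interior points. The gap is in the last step. With only the weights $\ga\le p-1<1$ available, the pigeonhole argument yields decay of the energy flux in the retarded time $u$, and this does not translate into usable decay of the spherical integrals $\int_{|y-x|=t-s}|\phi|^{p+1}\,dS$ over the backward cone of an interior point $(t_0,x_0)$: most of that cone sits at bounded $r$ and $u\sim s$, where no $r$-weight is gained, and the paper's introduction notes that controlling the interior for $\ga\le 1$ via the $X^{\ga}$ multiplier fails. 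Concretely, if you run H\"older on the Duhamel term against the unweighted flux bound $\int_{\mathcal{N}^-(q)}|\phi|^{p+1}d\si\les\cE$ (which is all that the $\pa_t$ energy estimate plus your hierarchy directly give on that cone), the dual factor is $\bigl(\int_0^{t_0}\tilde r^{\,1-p}\,d\tilde r\bigr)^{1/(p+1)}\sim t_0^{(2-p)/(p+1)}$, which grows in time for every $p<2$. Your heuristic $\int\langle t\rangle^{-2(p-1)}dt$ happens to land on the threshold $p>\tfrac32$, but not by the mechanism that actually closes the argument.

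What the paper does instead --- and what is missing from your proposal --- is to apply the multiplier $r^{\ga}L$ (with $L=\pa_t+\pa_r$ still based at the spatial origin) over the past domain of dependence $\cJ^-(q)$ of the observation point $q=(t_0,x_0)$ itself. After a careful decomposition in the null frame centered at $q$, the boundary term on the backward cone is shown to control $\int_{\mathcal{N}^-(q)}\bigl((1+\tau)r^{\ga}+1\bigr)|\phi|^{p+1}d\si\les\cE_{0,\ga}$ uniformly in $q$, where $\tau=\om\cdot\tilde\om$. The companion integration lemma, $\int\bigl((1+\tau)r^{\ga}+1\bigr)^{-p}d\tilde\om\les(1+r_0+\tilde r)^{-\ga}$, converts the angular weight $1+\tau$ into decay in $\tilde r$, so the dual H\"older factor becomes $\bigl(\int_0^{t_0}(1+\tilde r)^{-\ga}\tilde r^{\,1-p}\,d\tilde r\bigr)^{1/(p+1)}$, which is bounded precisely when $2-p<\ga<p-1$, i.e.\ $p>\tfrac32$. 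With this, no bootstrap, no flux-decay-in-$t$, and no commutation with rotations is needed (your wave-zone Sobolev step would in any case demand angular regularity beyond $\cE_{1,0}$). You would need to discover this $q$-dependent weighted flux estimate, or something equivalent, for your plan to close.
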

We give two remarks.
\begin{Remark}
  In view of the energy conservation, one can easily conclude that the solution grows at most polynomially in time $t$ with rate relying on the power $p$. The theorem firstly improves this growth to be uniformly bounded.
\end{Remark}
\begin{Remark}
Our proof also implies that the solution decays weakly in the spatial variable $x$.
\end{Remark}

For initial data and $p$ required in the above theorem, the solution to \eqref{eq:NLW:semi:3d} exists globally as shown by Ginibre-Velo, who derived global well-posedness in energy space for all $1<p<5$ in \cite{velo85:global:sol:NLW}. Related works regarding the global regularity could be found, for example, in
\cite{Jorgens61:energysub:NLW:lowerd}, \cite{Brenner79:globalregularity:NW}, \cite{Pecher76:NLW:global}, \cite{segal63:semigroup}, \cite{Vonwahl75:NW}, \cite{Velo89:globalsolution:NLW} and references therein. Asymptotic behavior of this global solution mainly concerns two types of questions: The first type is the problem of scattering, namely comparing the solution with linear solution as time goes to infinity in certain Sobolev spaces, like the energy space, the critical Sobolev space and the conformal energy space (weighted energy space). We refer to the latest work \cite{yang:scattering:NLW} for more detailed discussions.

Motivated by the scattering problem which requires a priori uniform spacetime bound for the solution, one can alternatively study the pointwise decay properties of the solution, initiated by the early work of
 Strauss \cite{Strauss:NLW:decay}, followed by extensions in \cite{vonWahl72:decay:NLW:super}, \cite{Roger:3DNLW:symmetry},  \cite{Bieli:3DNLW}, \cite{Pecher82:decay:3d}, \cite{yang:NLW:ptdecay:3D}. In any case, it is crucial to control the nonlinearity which is equivalent to certain decay properties of the solution. In particular, the larger $p$ leads to faster decay of the nonlinearity. Indeed scattering in energy space holds when $2.3542<p<5$, which follows from the pointwise decay estimates of the solution available only when $2<p<5$ (see the author's work \cite{yang:NLW:ptdecay:3D}).

The approach to study the asymptotic behavior of solutions to \eqref{eq:NLW:semi:3d} before the works \cite{yang:scattering:NLW}, \cite{yang:NLW:ptdecay:3D} relied on the time decay of the potential energy
\begin{align*}
  \int_{\mathbb{R}^3}|\phi|^{p+1}dx\leq C (1+t)^{\max\{4-2p, -2\}},\quad 1<p<5,
\end{align*}
obtained by using the conformal Killing vector field $t^2\pa_t+r^2 \pa_r$ ($r=|x|$) as multiplier
 (see\cite{Pecher82:decay:3d}, \cite{Velo87:decay:NLW}). It is obvious that the potential energy decays faster for larger $p$. This decay estimate was used by Pecher \cite{Pecher82:decay:3d} to derive the pointwise decay estimate for the solution when $p>\frac{1+\sqrt{13}}{2}$ with a corollary that the solution scatters in energy space for $p>2.7005$.

 By using the vector field method originally introduced by Dafermos-Rodnianski \cite{newapp}, the author in \cite{yang:scattering:NLW}, \cite{yang:NLW:ptdecay:3D} was able to extend the above asymptotic behaviors to $2<p<5$.  This method relies on the $r$-weighted energy estimate
  \begin{align}
  \label{eq:rp:bd}
    \iint_{\mathbb{R}^{3+1}}\frac{p-1-\ga}{p+1}r^{\ga-1}|\phi|^{p+1}dxdt\leq C \mathcal{E}_{0, \ga}[\phi]
  \end{align}
   derived by using the vector field $r^{\ga}(\pa_t+\pa_r)$ as multiplier with the restriction $0<\ga<p-1$. Combining this estimate with an integrated local energy estimate of the solution, one can obtain the following uniform weighted spacetime bound
\begin{align*}
  \iint_{\mathbb{R}^{3+1}}(1+t+|x|)^{\ga-1-\ep}|\phi|^{p+1}dxdt\leq C\mathcal{E}_{0, \ga}[\phi],\quad \forall \ep>0
\end{align*}
for the case when $\ga>1$, which forces $p>2$. This uniform bound allows us to use the vector field
\begin{align*}
  X^{\ga}=u_+^{\ga}(\pa_t-\pa_r)+v_+^{\ga}(\pa_t+\pa_r),\quad v_+=\sqrt{1+(t+|x|)^2},\quad u_+=\sqrt{1+u^2}
\end{align*}
as multiplier applied to any backward light cone, which then leads to the pointwise decay estimate for the solution (see details in \cite{yang:NLW:ptdecay:3D}).

The above argument fails for the case when $p\leq 2$ for the reason that in this case, $\ga\leq 1$ and the spacetime bound \eqref{eq:rp:bd} is not sufficient to control the spacetime error term generated by using the vector field $X^{\ga}$ as multiplier. We also note that the above time decay of potential energy is only available for the case when $p> 2$. The key observation for the small power $p$ case is that instead of using the vector field $X^{\gamma }$ as multiplier, we apply the vector field $r^{\ga}(\partial_t+\partial_r)$ to regions bounded by the backward light cone $\mathcal{N}^{-}(q)$ emanating from the point $q\in\mathbb{R}^{1+3}$ to obtain the following weighted energy estimate 
 \begin{align*}
  \int_{\mathcal{N}^{-}(q)}\big((1+\frac{x\cdot(x-x_0)}{|x||x-x_0|})r^{\ga}+1\big)|\phi|^{p+1}d\sigma
\leq C \mathcal{E}_{0, \ga}[\phi],\quad 0<\ga<p-1.
\end{align*}
This estimate is sufficient to conclude the main theorem that the solution is uniformly bounded for the case when $p>\frac{3}{2}$.

\section{Preliminaries and notations}
\label{sec:notation}

We use the standard polar local coordinate system $(t, r,
\om)$ of Minkowski space as well as the null coordinates $u=\frac{t-r}{2}$, $v=\frac{t+r}{2}$, in which $\om=\frac{x}{|x|}$ is the coordinate of the unit sphere.
Introduce a null frame $\{L, \Lb, e_1, e_2\}$ such that
\[
L=\pa_v=\pa_t+\pa_r,\quad \Lb=\pa_u=\pa_t-\pa_r
\]
and $\{e_1, e_2\}$ an orthonormal basis of the sphere with
constant radius $r$.
Let $\nabla$ be the shorthand for $(\pa_{x^1}, \pa_{x^2}, \pa_{x^3})$.

For any point $q=(t_0, x_0)\in \mathbb{R}^{3+1}$ and $r>0$, denote $\B_q(r)$ as the 3-dimensional ball at time $t_0$ with radius $r$ centered at $q$, that is,
\begin{align*}
  \B_q( r)=\{(t,x)| t=t_0, |x-x_0|\leq r\}.
\end{align*}
 The boundary of $\B_q( r)$ is the 2-sphere $\S_q(r)$. Without loss of generality, we only consider the solution in the future $t\geq 0$. Define the past null cone at $q$ as $\mathcal{N}^{-}(q)$, that is,
 \begin{align*}
   \mathcal{N}^{-}(q):=\{(t, x)| |t-t_0|=|x-x_0|,\quad t\geq 0\}.
 \end{align*}
 The region enclosed by this cone is the past of the point $q$ and we denote it as $\mathcal{J}^{-}(q)$, that is,
   \begin{align*}
   \mathcal{J}^{-}(q):=\{(t, x)| |x-x_0|\leq |t-t_0|,\quad t\geq 0\}.
 \end{align*}
Additional to the standard coordinates $(t, x)$ as well as the associated polar coordinates, let $(\tilde{t}, \tilde{x})$ be the new coordinates centered at the point $q=(t_0, x_0)$
\[
\tilde{t}=t-t_0,\quad \tilde{x}=x-x_0,\quad \tilde{r}=|\tilde{x}|,\quad \tilde{\om}=\frac{\tilde{x}}{|\tilde{x}|},\quad \tilde{u}=\f12 (\tilde{t}-\tilde{r}),\quad \tilde{v}=\f12(\tilde{t}+\tilde{r}).
\]
We also have the associated null frame $\{\tilde{L}, \tilde{\Lb}, \tilde{e}_1, \tilde{e}_2\}$. Under this new coordinates, the past null cone $\mathcal{N}^{-}(q)$ can be characterized by $\{\tilde{v}=0\}\cap\{0\leq t\leq t_0\}$. Through out this paper, the coordinates $(\tilde{t}, \tilde{x})$ are always referred to be the translated ones centered at the point  $q=(t_0, x_0)$ unless it is clearly emphasized.

Finally we make a convention that $A\les B$ means there exists a constant $C$, depending only on $p$ such that $A\leq CB$.

\section{A uniform $r$-weighted energy estimate through backward light cones}
Our goal is to investigate the pointwise decay properties for solutions of \eqref{eq:NLW:semi:3d} for small power $p$ such that $1<p\leq 2$. Following the method introduced in \cite{yang:NLW:ptdecay:3D}, we first derive a uniform weighted energy estimate through backward light cones.
\begin{Prop}
\label{prop:EF:cone:NW:3d:smallp}
Assume that $1<p\leq 2$.
Let $q=(t_0, x_0)$ be any point in $\mathbb{R}^{3+1}$. Then for solution $\phi$ of the nonlinear wave equation \eqref{eq:NLW:semi:3d} and for all $0\leq \ga\leq   p-1$, we have the following uniform bound
\begin{equation}
\label{eq:Eflux:ex:EF}
\begin{split}
&\int_{\mathcal{N}^{-}(q)}((1+\tau)r^{\ga}+1)|\phi|^{p+1} d\si 
\leq C \mathcal{E}_{0, \ga}[\phi]
\end{split}
\end{equation}
for some constant $C$ depending only on $p$. Here $d\si$ is the surface measure, $\tau=\om\cdot \tilde{\om}$ and the tilde components are measured under the coordinates $(\tilde{t}, \tilde{x})$ centered at the point $q=(t_0, x_0)$.
\end{Prop}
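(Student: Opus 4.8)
The plan is to run the $r$-weighted vector field scheme of Dafermos--Rodnianski that produces \eqref{eq:rp:bd}, but on the region $\mathcal{J}^{-}(q)$ instead of on a time slab, so that the new boundary contribution becomes the flux through $\mathcal{N}^{-}(q)$. Two multipliers will be used: the energy field $\pa_t$, which produces the ``$+1$'' summand of the weight, and $X=r^{\ga}L$ with $L=\pa_t+\pa_r$, which produces the ``$(1+\tau)r^{\ga}$'' summand. With $\tilde T_{\mu\nu}=\pa_\mu\phi\,\pa_\nu\phi-\f12 m_{\mu\nu}\big(\pa^{\a}\phi\,\pa_{\a}\phi+\tfrac{2}{p+1}|\phi|^{p+1}\big)$ the stress--energy tensor of \eqref{eq:NLW:semi:3d} (divergence free on solutions), and a lower order modification of Dafermos--Rodnianski type (equivalently, working with $\psi=r\phi$), one forms a current $J_\mu$ whose divergence equals, up to a positive constant,
\[
r^{\ga-1}\Big(\ga\big(L\phi+\tfrac{\phi}{r}\big)^2+(2-\ga)|\nabb\phi|^2+(p-1-\ga)|\phi|^{p+1}\Big)+\ga(1-\ga)\,r^{\ga-3}\phi^2 .
\]
For $0\le\ga\le p-1\le1$ all four terms are non-negative -- this is exactly where the defocusing sign of the nonlinearity and the restriction on $\ga$ enter -- so the bulk has a favorable sign.

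Applying the divergence theorem to $J_\mu$ over $\mathcal{J}^{-}(q)$, whose boundary is the flat disc $\B_q(t_0)\subset\{t=0\}$ together with the null cone $\mathcal{N}^{-}(q)$, one obtains an identity of the shape
\[
\int_{\mathcal{N}^{-}(q)}[\text{flux}]\,d\si+\iint_{\mathcal{J}^{-}(q)}[\text{bulk}]=\int_{\B_q(t_0)}[\text{data}]\,dx ,\qquad [\text{bulk}]\ge0 ,
\]
and we discard the bulk. On $\B_q(t_0)$ the current at $t=0$ is dominated pointwise by $r^{\ga}\big(|\nab\phi_0|^2+|\phi_1|^2+|\phi_0|^{p+1}\big)+r^{\ga-2}|\phi_0|^2$, so $r^{\ga}\le(1+|x|)^{\ga}$ together with the weighted Hardy inequality $\int r^{\ga-2}|f|^2\les\int r^{\ga}|\nab f|^2$ (valid for $\ga\ge0$) bounds the data term by $C\,\cE_{0,\ga}[\phi]$.

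For the flux through $\mathcal{N}^{-}(q)$ one decomposes $L$ in the null frame $\{\tilde L,\tilde{\Lb},\tilde e_1,\tilde e_2\}$ centered at $q$, which is adapted to $\mathcal{N}^{-}(q)$ (its generator is $\tilde{\Lb}$). Since $\pa_t=\f12(\tilde L+\tilde{\Lb})$ and $\pa_r=\om\cdot\nab=\tau\,\pa_{\tilde r}+\om^{\perp}\cdot\nabb$ with $\tau=\om\cdot\tilde\om$ and $|\om^{\perp}|^2=1-\tau^2$, one gets $L=\tfrac{1+\tau}{2}\tilde L+\tfrac{1-\tau}{2}\tilde{\Lb}+\om^{\perp}\cdot\nabb$, so the flux integrand is, modulo lower order correction terms,
\[
r^{\ga}\Big(\tfrac{1+\tau}{2}\big(|\nabb\phi|^2+\tfrac{2}{p+1}|\phi|^{p+1}\big)+\tfrac{1-\tau}{2}(\tilde{\Lb}\phi)^2+(\tilde{\Lb}\phi)\,(\om^{\perp}\cdot\nabb\phi)\Big).
\]
By $|\om^{\perp}\cdot\nabb\phi|\le\sqrt{1-\tau^{2}}\,|\nabb\phi|$ and $\tfrac{1-\tau}{2}a^{2}+\tfrac{1+\tau}{2}b^{2}\ge\sqrt{1-\tau^{2}}\,|ab|$ the derivative part is non-negative, so this flux is $\ge\tfrac{1}{p+1}\int_{\mathcal{N}^{-}(q)}(1+\tau)r^{\ga}|\phi|^{p+1}\,d\si$. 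Combined with the previous paragraph (the case $\ga=0$ being immediate, since there $(1+\tau)r^{\ga}\le2$) this gives $\int_{\mathcal{N}^{-}(q)}(1+\tau)r^{\ga}|\phi|^{p+1}\,d\si\les\cE_{0,\ga}[\phi]$. The ``$+1$'' part follows from the same identity with $X$ replaced by $\pa_t$: the energy flux of $\pa_t$ through $\mathcal{N}^{-}(q)$ dominates $\tfrac{1}{p+1}\int_{\mathcal{N}^{-}(q)}|\phi|^{p+1}\,d\si$ and, there being no bulk, equals the energy on $\B_q(t_0)$, hence is $\les\cE_{0,0}[\phi]\le\cE_{0,\ga}[\phi]$. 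Adding the two bounds yields \eqref{eq:Eflux:ex:EF}.

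The geometric core of the argument, and the new point compared with \cite{yang:NLW:ptdecay:3D}, is this flux computation: the weight $1+\tau$ is exactly the coefficient of $\tilde L$ when the origin-centered null vector $L$ is re-expanded in the frame based at $q$, and it degenerates precisely where $L$ becomes tangent to the incoming null direction of $\mathcal{N}^{-}(q)$. I expect the main technical obstacle to be the treatment of the lower order terms produced by the Dafermos--Rodnianski modification -- both in the bulk (handled by the defocusing sign) and, more delicately, on $\mathcal{N}^{-}(q)$ and along the edge $\S_q(t_0)$ -- which must be absorbed using Hardy-type inequalities on slices and on the cone together with the integrated local energy estimate; and the endpoint $\ga=p-1$ (in particular $\ga=1$ when $p=2$), where some of the bulk coefficients in the first display vanish and one concludes by a limiting argument from $\ga<p-1$.
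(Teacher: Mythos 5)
Your overall strategy is the paper's: apply the current built from $X=r^{\ga}L$ (plus the Dafermos--Rodnianski lower-order modification) over $\mathcal{J}^{-}(q)$, discard the signed bulk, bound the disc contribution by $\cE_{0,\ga}[\phi]$, decompose $L=\tfrac{1+\tau}{2}\tilde L+\tfrac{1-\tau}{2}\tilde{\Lb}+\om^{\perp}\cdot\tilde{\nabb}$ on $\mathcal{N}^{-}(q)$, and use $\tfrac{1-\tau}{2}a^2+\tfrac{1+\tau}{2}b^2\ge\sqrt{1-\tau^2}|ab|$ for the quadratic flux terms; the ``$+1$'' part via $\pa_t$ is also as in the paper. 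The identification of $1+\tau$ as the $\tilde L$-coefficient of $L$ is exactly the key geometric point.

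The genuine gap is the one you flag but then wave at with the wrong tools: the lower-order flux terms on $\mathcal{N}^{-}(q)$ coming from $\chi=r^{\ga-1}$ and $Y=\tfrac12\ga r^{\ga-2}|\phi|^2L$ (terms of the schematic form $r^{\ga}\phi\,\tilde{\Lb}(r\phi)$, $r^{\ga}\phi\,\om\cdot\tilde{\nabb}(r\phi)$, and $r^{\ga}|\phi|^2$ with $\tau$-dependent coefficients). Your proposal to absorb these ``using Hardy-type inequalities on slices and on the cone together with the integrated local energy estimate'' is unlikely to work: the only quadratic terms available on the cone to absorb against are $\tfrac{1-\tau}{2}r^{\ga}|\tilde{\Lb}(r\phi)|^2$ and $\tfrac{1+\tau}{2}r^{\ga}|\tilde{\nabb}(r\phi)|^2$, whose coefficients degenerate as $\tau\to\pm1$, so a Cauchy--Schwarz absorption loses exactly where it is needed, and there is no integrated local energy estimate adapted to $\mathcal{J}^{-}(q)$ in play. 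What the paper actually does is an exact algebraic computation: after completing squares, the entire collection of lower-order flux terms reduces to
\[
\tfrac12(1-\ga)r^{\ga}|\phi|^2+r^{\ga}(\tau\tilde{\Lb}-\om\cdot\tilde{\nabb})(r\phi)\,\phi,
\]
which is then shown to be a pure divergence along the cone (a $\tilde{\Lb}$-derivative plus an angular $\tilde{\Om}_{ij}$-derivative) with the remaining zeroth-order coefficients cancelling identically; its integral over $\mathcal{N}^{-}(q)$ is the edge term $\tfrac12\int_{\S_{(0,x_0)}(t_0)}r^{\ga-1}\tau\tilde r^2|\phi|^2d\tilde\om$, which in turn cancels exactly against the full-derivative term $\int_{\B_{(0,x_0)}(t_0)}\pa_r(r^{1+\ga}|\phi|^2)r^{-2}dx$ in the disc integral. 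This cancellation is the substantive content of the proof and is missing from your argument. Two smaller points: the endpoint $\ga=p-1$ needs no limiting argument (the bulk coefficient $\tfrac{p-1-\ga}{p+1}$ simply vanishes and the bulk remains nonnegative), and your extra bulk term $\ga(1-\ga)r^{\ga-3}\phi^2$ does not appear with the paper's choice of $Y$ and $\chi$, for which the lower-order bulk vanishes identically for all $0\le\ga\le2$.
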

\begin{proof}
The proof goes similar to those in \cite{yang:NLW:ptdecay:3D}, \cite{yang:scattering:NLW}. We choose vector fields as in \cite{yang:scattering:NLW} but apply them to the cone $\mathcal{J}^{-}(q)$ as in \cite{yang:NLW:ptdecay:3D}. We repeat the proof here but may skip some detailed computations which have already done in \cite{yang:NLW:ptdecay:3D}, \cite{yang:scattering:NLW}.

Let's first review the vector field method. Recall the energy momentum tensor for the scalar field $\phi$
\begin{align*}
  T[\phi]_{\mu\nu}=\pa_{\mu}\phi\pa_{\nu}\phi-\f12 m_{\mu\nu}(\pa^\ga \phi \pa_\ga\phi+\frac{2}{p+1} |\phi|^{p+1}),
\end{align*}
where $m_{\mu\nu}$ is the flat Minkowski metric on $\mathbb{R}^{1+d}$. Then we can compute that
\begin{align*}
  \pa^\mu T[\phi]_{\mu\nu}=&(\Box\phi-  |\phi|^{p-1}\phi)\pa_\nu\phi.
\end{align*}
Now for any vector fields $X$, $Y$ and any function $\chi$, define the current
\begin{equation*}
J^{X, Y, \chi}_\mu[\phi]=T[\phi]_{\mu\nu}X^\nu -
\f12\pa_{\mu}\chi \cdot|\phi|^2 + \f12 \chi\pa_{\mu}|\phi|^2+Y_\mu.
\end{equation*}
Then for solution $\phi$ of equation \eqref{eq:NLW:semi:3d}, we have the energy identity
\begin{equation*}
\iint_{\mathcal{D}}\pa^\mu  J^{X,Y,\chi}_\mu[\phi] d\vol =\iint_{\mathcal{D}}div(Y)+ T[\phi]^{\mu\nu}\pi^X_{\mu\nu}+
\chi \pa_\mu\phi\pa^\mu\phi -\f12\Box\chi\cdot|\phi|^2 +\chi \phi\Box\phi d\vol
\end{equation*}
for any domain $\mathcal{D}$ in $\mathbb{R}^{3+1}$. Here $\pi^X=\f12 \cL_X m$  is the deformation tensor for the vector field $X$.

In the above energy identity, choose the vector fields $X$, $Y$ and the function $\chi$ as follows:
\[
X=r^{\gamma} L,\quad Y=\frac{1}{2}\ga r^{\ga-2}|\phi|^2 L, \quad \chi=r^{\ga-1},\quad \forall 0\leq \ga\leq 2.
\]
The computations in \cite{yang:scattering:NLW} show that
\begin{align*}
&div(Y)+T[\phi]^{\mu\nu}\pi^X_{\mu\nu}+
\chi \pa_\mu\phi \pa^\mu\phi  +\chi\phi\Box\phi-\f12 \Box\chi |\phi|^2\\
&=\f12 r^{\ga-3}(\ga|L(r\phi)|^2+(2-\ga)|\nabb(r\phi)|^2 )+\frac{p-1-\ga }{p+1} r^{\ga-1}|\phi|^{p+1} .
\end{align*}
For the case when $0\leq  \ga \leq p-1\leq 2$, this term is nonnegative.

Let the domain $\mathcal{D}$ be $\mathcal{J}^{-}(q)$ with boundary $\B_{(0, x_0)}(t_0)\cup \mathcal{N}^{-}(q)$. By using Stokes' formula, the left hand side of the above energy identity consists of the integral on the initial hypersurface $\B_{(0, x_0)}(t_0)$ and on the backward light cone $\mathcal{N}^{-}(q)$.
For the integral on $\B_{(0, x_0)}(t_0)$, recall from \cite{yang:scattering:NLW} that
\begin{align}
\label{eq:PWE:ex:bxt0}
 \int_{\B_{(0, x_0)}(t_0)} i_{J^{X, Y,\chi}[\phi]}d\vol 
&=\f12 \int_{\B_{(0, x_0)}(t_0)}  r^\ga(  r^{-2}|L(r\phi)|^2 +|\nabb\phi|^2+\frac{2}{p+1}|\phi|^{p+1})  -\pa_r  (  r^{1+\ga} |\phi|^2 )r^{-2}dx.
\end{align}
 For the boundary integral on the backward light cone $\mathcal{N}^{-}(q)$, as in \cite{yang:NLW:ptdecay:3D}, we compute the explicit form under the coordinates centered at the point $q=(t_0, x_0)$.
 Recall the volume form
\[
d\vol=dxdt=d\tilde{x}d\tilde{t}=2\tilde{r}^2 d\tilde{v}d\tilde{u}d\tilde{\om}.
\]
Under these new coordinates $(\tilde{t}, \tilde{x})$, we can compute that
\begin{align*}
-i_{J^{X, Y,\chi}[\phi]}d\vol=J_{\tilde{\Lb}}^{X, Y,\chi}[\phi]\tilde{r}^2d\tilde{u}d\tilde{\om}= ( T[\phi]_{\tilde{\Lb}\nu}X^\nu -
\f12(\tilde{\Lb}\chi) |\phi|^2 + \f12 \chi\cdot\tilde{\Lb}|\phi|^2 +Y_{\tilde{\Lb}}) \tilde{r}^2d\tilde{u}d\tilde{\om}.
\end{align*}
For the main quadratic terms, we have
\begin{align*}
T[\phi]_{\tilde{\Lb}\nu}X^\nu =T[\phi]_{\tilde{\Lb}\tilde{\Lb}}X^{\tilde{\Lb}}+T[\phi]_{\tilde{\Lb}\tilde{L}}X^{\tilde{L}}+T[\phi]_{\tilde{\Lb}\tilde{e}_i}X^{\tilde{e}_i}.
\end{align*}
Now we need to write the vector field $X$ under the new null frame $\{\tilde{L}, \tilde{\Lb}, \tilde{e}_1, \tilde{e}_2\}$ centered at the point $q$. Note that
\begin{align*}
\pa_r=\om \cdot \nabla=\om \cdot \tilde{\nabla}=\om\cdot \tilde{\om}\pa_{\tilde{r}}+ \om\cdot (\tilde{\nabla}-\tilde{\om}\pa_{\tilde{r}}).
\end{align*}
Then we have
\begin{align*}
X=r^\gamma (\pa_t+\pa_r)
&=r^\ga (\pa_{\tilde{t}}+\om\cdot \tilde{\om}\pa_{\tilde{r}}+ \om\cdot \tilde{\nabb})=\f12 r^\ga(1+\om\cdot \tilde{\om})\tilde{L}+\f12 r^\ga(1-\om\cdot \tilde{\om}) \tilde{\Lb}+r^\ga \om\cdot \tilde{\nabb}.
\end{align*}
Here $\tilde{\nabb}=\tilde{\nabla}-\tilde{\om}\pa_{\tilde{r}}$. Denote $\tau=\om\cdot \tilde{\om}$. Then we can compute the quadratic terms
\begin{align*}
T[\phi]_{\tilde{\Lb}\nu}X^\nu 
=& \f12 (1-\tau)r^\ga |{\tilde{\Lb}}\phi|^2 + \f12 (1+\tau)r^\ga (|\tilde{\nabb}\phi|^2+\frac{2}{p+1}|\phi|^{p+1})+r^\ga  ({\tilde{\Lb}}\phi) (\om\cdot \tilde{\nabb})\phi.
\end{align*}
These terms are nonnegative. Indeed note that
\begin{align*}
\tilde{\Lb}(r)=-\tilde{\om}_i\pa_i(r)=-\tilde{\om}\cdot \om =-\tau,\quad \tilde{\nabb}(r)=(\tilde{\nabla}-\tilde{\om}\pa_{\tilde{r}})(r)=\om-\tilde{\om}\tau.
\end{align*}
Therefore we can write
\begin{align*}
&-\f12 r^2 (\tilde{\Lb}{\chi})|\phi|^2+\f12 r^2\chi \tilde{\Lb}|\phi|^2+r^2 Y_{\tilde{\Lb}}=r^{\ga}( {\tilde{\Lb}}(r\phi)+\tau \phi) \phi+\f12\tau (\ga-1)r^{\ga}|\phi|^2-\f12\ga (1+\tau)r^{\ga}|\phi|^2,\\
&r^2|\tilde{\Lb}\phi|^2=|{\tilde{\Lb}}(r\phi)-\tilde{\Lb}(r)\phi|^2=|{\tilde{\Lb}}(r\phi)|^2+\tau^2|\phi|^2+2 {\tilde{\Lb}}(r\phi) \tau\phi,\\
& r^2|\tilde{\nabb}\phi|^2
=|\tilde{\nabb}(r\phi)|^2+(1-\tau^2)|\phi|^2-2(\om-\tilde{\om}\tau)\cdot\tilde{\nabb}(r\phi) \phi,\\
& r^2 ({\tilde{\Lb}}\phi)  (\om\cdot \tilde{\nabb})\phi={\tilde{\Lb}}(r\phi) (\om \cdot \tilde{\nabb})(r\phi)-\tau(1-\tau^2)|\phi|^2+\phi \tau(\om\cdot \tilde{\nabb})(r\phi) -(1-\tau^2){\tilde{\Lb}}(r\phi)\phi.
\end{align*}
Notice that
\begin{align*}
  |(\om\cdot \tilde{\nabb})(r\phi)|=|(\om\times \tilde{\om}\cdot \tilde{\nabb})(r\phi)|=\sqrt{1-\tau^2}|\tilde{\nabb}(r\phi)|.
\end{align*}
In particular the quadratic terms are nonnegative
\begin{align*}
&\f12 (1-\tau)r^\ga |\tilde{\Lb}(r\phi)|^2+\f12(1+\tau)r^\ga |\tilde{\nabb}(r\phi)|^2+r^\ga {\tilde{\Lb}}(r\phi) (\om \cdot \tilde{\nabb})(r\phi)\geq 0.
\end{align*}
For the lower order terms other terms, we compute that
\begin{align*}
&\f12 r^{\ga}(1-\tau)(\tau^2|\phi|^2+2{\tilde{\Lb}}(r\phi)\tau\phi )+r^{\ga}( {\tilde{\Lb}}(r\phi)+\tau \phi) \phi+\f12\tau (\ga-1)r^{\ga}|\phi|^2-\f12\ga (1+\tau)r^{\ga}|\phi|^2\\
&+\f12 r^{\ga}(1+\tau) \big((1-\tau^2)|\phi|^2-2(\om-\tilde{\om}\tau)\tilde{\nabb}(r\phi) \phi\big)\\
&+r^{\ga}\big(-\tau(1-\tau^2)|\phi|^2+\phi \tau(\om\cdot \tilde{\nabb})(r\phi)-\phi (1-\tau^2){\tilde{\Lb}}(r\phi) \big)\\
&=\f12(1-\ga)r^{\ga}|\phi|^2
+r^{\ga}(\tau {\tilde{\Lb}}-\om\cdot \tilde{\nabb})(r\phi) \phi\\
&=-\f12 r^2\tilde{r}^{-1} \tilde{\Om}_{ij}(r^{\ga-1} \om_j\tilde{\om}_i |\phi|^2)+\f12 \tilde{r}^{-2}r^2\tilde{\Lb}(r^{\ga-1}\tau \tilde{r}^2 |\phi|^2)\\
&+\f12(1-\ga)r^\ga|\phi|^2-\f12 \tilde{r}^{-2}r^2\tilde{\Lb}(r^{\ga-3}\tau\tilde{r}^2) |r\phi|^2+\f12 r^2 \tilde{r}^{-1}\tilde{\Om}_{ij}(r^{\ga-3} \om_j\tilde{\om}_i) |r\phi|^2.
\end{align*}
Here $\tilde{\Om}_{ij}=\tilde{x}_i\tilde{\pa}_{j}-\tilde{x}_j\tilde{\pa}_{{i}}=\tilde{r}(\tilde{\om}_i\tilde{\pa}_j-\tilde{\om}_j\tilde{\pa}_i)$ and we have omitted the summation sigh for repeated indices $i$, $j$ for simplicity. By computations, note that
\begin{align*}
&\tilde{r}^{-1}\tilde{\Om}_{ij}(r^{-3}\om_j\tilde{\om}_i)=-2r^{\ga-4}(1-2\tau^2)-2\tau \tilde{r}^{-1}r^{\ga-3}+\ga r^{\ga-4}(1-\tau^2),\\
&\tilde{r}^{-2}r^{4-\ga}\tilde{\Lb}(r^{\ga-3}\tilde{r}^2\tau)=4\tau^2-1-2r\tilde{r}^{-1}\tau-\ga\tau^2.
\end{align*}
Thus the last line in the previous equality vanishes
\begin{align*}
  &\f12(1-\ga)r^{\ga}|\phi|^2-\f12 \tilde{r}^{-2}r^2\tilde{\Lb}(r^{\ga-3}\tau\tilde{r}^2) |r\phi|^2+\f12 r^2 \tilde{r}^{-1}\tilde{\Om}_{ij}(r^{\ga-3} \om_j\tilde{\om}_i) |r\phi|^2\\
  &=\f12 r^{\ga}|\phi|^2\left(1-\ga-4\tau^2+1+2r\tilde{r}^{-1}\tau+\ga\tau^2-2 (1-2\tau^2)-2\tau \tilde{r}^{-1}r +\ga  (1-\tau^2) \right)\\
  &=0.
\end{align*}
By using integration by parts on the backward light cone $\mathcal{N}^{-}(q)$, the integral of the second last line is
\begin{align*}
  &\int_{\mathcal{N}^{-}(q)}\big(-\f12 r^2\tilde{r}^{-1} \tilde{\Om}_{ij}(r^{\ga-3} \om_j\tilde{\om}_i |r\phi|^2)+\f12 \tilde{r}^{-2}r^2\tilde{\Lb}(  r^{\ga-1}\tau\tilde{r}^2 |\phi|^2)\big)r^{-2}\tilde{r}^2 d\tilde{u}d\tilde{\om}\\
  &= \f12 \int_{\S_{(0, x_0)}(t_0)}r^{\ga-1}\tau \tilde{r}^2 |\phi|^2d\tilde{\om}.
\end{align*}
The above computations show that the quadratic terms are nonnegative and the lower order terms are equal to the above integral on the 2-sphere on the initial hypersurface, that is,
\begin{equation*}
\begin{split}
&-\int_{\mathcal{N}^{-}(q)}i_{J^{X, Y, \chi}[\phi]}d\vol+ \f12 \int_{\S_{(0, x_0)}(t_0) }r^{\ga-1}\tau \tilde{r}^2 |\phi|^2d\tilde{\om}\geq \int_{\mathcal{N}^{-}(q)} \frac{1+\tau}{p+1} r^{\ga}|\phi|^{p+1}  \tilde{r}^2 d\tilde{u}d\tilde{\om}.
\end{split}
\end{equation*}
On the other hand for the case when $0\leq \ga \leq p-1<2$, the bulk integral on the right hand side of the energy identity is nonnegative, that is,
\begin{align*}
  \int_{\mathcal{N}^{-}(q)}i_{J^{X, Y, \chi}[\phi]}d\vol+\int_{\B_{(0, x_0)}(t_0)}i_{J^{X, Y, \chi}[\phi]}d\vol\geq 0.
\end{align*}
Adding this estimate to the previous inequality and in view of the expression \eqref{eq:PWE:ex:bxt0}, we conclude that
\begin{align*}
   &\int_{\B_{(0, x_0)}(t_0)}  r^\ga(  r^{-2}|L(r\phi)|^2 +|\nabb\phi|^2+\frac{2}{p+1}|\phi|^{p+1})  -\pa_r  (  r^{1+\ga} |\phi|^2 )r^{-2}dx\\
   &+  \int_{\S_{(0, x_0)}(t_0) }r^{\ga-1}\tau \tilde{r}^2 |\phi|^2d\tilde{\om}\geq 2\int_{\mathcal{N}^{-}(q)} \frac{1+\tau}{p+1} r^{\ga}|\phi|^{p+1}  \tilde{r}^2 d\tilde{u}d\tilde{\om}.
\end{align*}
Note that
\begin{align*}
  &\int_{\B_{(0, x_0)}(t_0)}   \pa_r  (  r^{1+\ga} |\phi|^2 )r^{-2}dx=\int_{\B_{(0, x_0)}(t_0)}  \textnormal{div} (\om r^{\ga-1}|\phi|^2)dx
  =\int_{\S_{(0, x_0)}(t_0) }r^{\ga-1}\tau \tilde{r}^2 |\phi|^2d\tilde{\om}.
\end{align*}
 We therefore derive from the previous inequality that
 \begin{align*}
   2\int_{\mathcal{N}^{-}(q)} \frac{1+\tau}{p+1} r^{\ga}|\phi|^{p+1}  d\sigma\leq  \int_{\B_{(0, x_0)}(t_0)}  r^\ga(  r^{-2}|L(r\phi)|^2 +|\nabb\phi|^2+\frac{2}{p+1}|\phi|^{p+1})  dx \leq \mathcal{E}_{0, \ga}[\phi].
 \end{align*}
The uniform bound \eqref{eq:Eflux:ex:EF} of the Proposition then follows by the standard energy estimates obtained by using the vector field $\pa_t$ as multiplier.
\end{proof}

\section{The uniform pointwise bound of the solution}
In this section, we make use of the weighted energy flux bound derived in the previous section to investigate the asymptotic behaviour of the solution. The idea is to use the uniform weighted energy estimate to control the nonlinearity directly. For this purpose, we need the following technical integration lemma.

\begin{Lem}
\label{lem:integration:smallp}
  Assume $0\leq \ga< p-1$. Fix $q=(t_0, x_0)$ in $\mathbb{R}^{3+1}$. For the $2$-sphere $\S_{(t_0-\tilde{r}, x_0)}(\tilde{r})$ on the backward light cone $\mathcal{N}^{-}(q)$, we have
  \begin{equation}
    \label{eq:integration:ex:ab}
    \begin{split}
    &\int_{\S_{(t_0-\tilde{r}, x_0)}(\tilde{r})} ((1+\tau)r^{\ga}+1)^{-p} d\tilde{\om} \leq C (1+r_0+\tilde{r})^{-\ga}
    \end{split}
  \end{equation}
  for some constant $C$ depending only on $p$ and $\ga$.
  Here $\tau=\om\cdot \tilde{\om}$, $r_0=|x_0|$ and $0\leq \tilde{r}\leq t_0 $.
\end{Lem}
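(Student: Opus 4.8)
The plan is to exploit the rotational symmetry of the configuration about the axis $\hat{x}_0=x_0/|x_0|$ to turn the surface integral into a one–dimensional integral, and then to change variables from the polar angle to $r=|x|$, where $x=x_0+\tilde{r}\,\tilde{\om}$ runs over the sphere in question. Every point of $\S_{(t_0-\tilde{r},x_0)}(\tilde{r})$ has the form $x=x_0+\tilde{r}\,\tilde{\om}$, so the integrand depends on $\tilde{\om}$ only through $s:=\hat{x}_0\cdot\tilde{\om}$, whence $\int_{\S}(\cdots)\,d\tilde{\om}=2\pi\int_{-1}^{1}(\cdots)\,ds$. With $r^2=r_0^2+\tilde{r}^2+2r_0\tilde{r}s$ one computes $\tau=\om\cdot\tilde{\om}=(r_0 s+\tilde{r})/r$ and hence the algebraic identity
\[
(1+\tau)r^{\ga}=(r+r_0 s+\tilde{r})\,r^{\ga-1}=\frac{(r+\tilde{r})^2-r_0^2}{2\tilde{r}}\,r^{\ga-1}=\frac{(r+\tilde{r}-r_0)(r+\tilde{r}+r_0)}{2\tilde{r}}\,r^{\ga-1},
\]
which is the main structural input. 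The degenerate cases $r_0=0$ or $\tilde{r}=0$, where this reduction breaks down, are handled by a direct (and simpler) computation, and when $r_0+\tilde{r}\lesssim 1$ the estimate is trivial since the integrand is bounded by $1$ (because $1+\tau\geq 0$) while the right-hand side is $\gtrsim 1$. So assume henceforth $R:=r_0+\tilde{r}\geq 1$, in which case $(1+R)^{-\ga}\approx R^{-\ga}$ and it suffices to bound the integral by $C R^{-\ga}$.

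I would then split into two regimes according to whether $\tilde{r}\geq 2r_0$ or $\tilde{r}<2r_0$. In the first regime there is in fact no degeneracy: from $r\geq\tilde{r}-r_0\geq\tilde{r}/2$ and $1+\tau=(r+r_0 s+\tilde{r})/r\geq(2\tilde{r}-2r_0)/r\gtrsim 1$ one obtains the pointwise lower bound $(1+\tau)r^{\ga}\gtrsim\tilde{r}^{\ga}\gtrsim R^{\ga}$ uniformly on $\S$. Hence the integrand is $\lesssim R^{-\ga p}\leq R^{-\ga}$ (using $p>1$, $R\geq 1$), and integrating over the unit sphere finishes this case.

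In the second regime $\tilde{r}<2r_0$ (so $r_0>R/3$) I would use the change of variables $s\mapsto r$, under which $ds=r\,dr/(r_0\tilde{r})$ and $r$ ranges over $[\,|r_0-\tilde{r}|,\,r_0+\tilde{r}\,]$. Since $\ga<p-1\leq 1$ the factor $r^{\ga-1}$ is nonincreasing and $r\leq R$, while $r+\tilde{r}+r_0\geq R$; combining these with the identity above gives the clean lower bound $(1+\tau)r^{\ga}+1\geq\frac{(r+\tilde{r}-r_0)R^{\ga}}{2\tilde{r}}+1$. Substituting $u=r+\tilde{r}-r_0\geq 0$, bounding $r\leq R$ in the remaining numerator, extending the $u$–integral to $[0,\infty)$, and using $\int_0^\infty(1+v)^{-p}\,dv=(p-1)^{-1}$, the integral is bounded by a constant multiple of $\dfrac{R^{1-\ga}}{r_0\,(p-1)}$, which is $\lesssim R^{-\ga}$ precisely because $r_0>R/3$ here. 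This completes the proof.

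The main obstacle is the non-uniformity of the weight $(1+\tau)r^{\ga}$ over $\S$: on the part of the sphere near the antipode of $\om$ the weight can be arbitrarily small, so no pointwise bound is possible there and one must instead estimate the \emph{measure} of that part. Passing to the variable $r=|x|$ is exactly what makes this measure computable, and the dichotomy $\tilde{r}\gtrless 2r_0$ is the split between "the bad part is empty" and "the bad part is small because $|x_0|$ is comparable to $r_0+\tilde{r}$". Everything else — the degenerate cases, the convergence of the elementary $u$–integral (which is where $p>1$ enters), and the bookkeeping of $\ga$–dependent constants — is routine.
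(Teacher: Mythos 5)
Your proof is correct, and it takes a genuinely different and noticeably more economical route than the paper's. Both arguments begin the same way: reduce the surface integral to a one-dimensional integral in $s$ and exploit the algebraic fact that $(1+\tau)r$ factors, which you write as $\frac{(r+\tilde r)^2-r_0^2}{2\tilde r}$ and the paper writes in the equivalent form $r+\tilde r-r_0 s=\frac{(1-s^2)r_0^2}{r+r_0 s-\tilde r}$ (with the opposite sign convention for $s$). The divergence is in what comes next. The paper stays in the $s$-variable and runs a long case analysis --- exterior versus interior position of $q$, then $\tilde r\geq 2r_0$, $r_0\leq\tilde r\leq 2r_0$, $\tilde r\leq r_0$, with the $s$-interval further split at the points $s_0$ and $s_*$ and each piece controlled through asymptotic equivalences $r\sim\cdots$. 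You instead change variables $s\mapsto r$ with Jacobian $ds=r\,dr/(r_0\tilde r)$, after which the monotonicity of $r^{\ga-1}$ (this is where you use $\ga\leq 1$, i.e.\ $p\leq 2$, just as the paper uses $p\ga\leq 2$) gives the single lower bound $(1+\tau)r^{\ga}+1\geq 1+(r+\tilde r-r_0)R^{\ga}/(2\tilde r)$, and the whole estimate collapses to one convergent elementary integral $\int_0^\infty(1+v)^{-p}dv$ together with the non-degenerate regime $\tilde r\geq 2r_0$, where the weight is bounded below pointwise by a multiple of $R^\ga$. Your dichotomy $\tilde r\gtrless 2r_0$ replaces the paper's half-dozen sub-cases, and the explicit bound $R^{1-\ga}/(r_0(p-1))$ makes transparent exactly where $p>1$ and $r_0\gtrsim R$ enter. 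The only cost is that the paper's finer analysis actually yields the stronger decay $R^{-p\ga}$ on most of its sub-regions, but since the lemma (and its application) only needs $R^{-\ga}$, nothing is lost. Two small points to make explicit when writing this up: the point $r=0$ (which occurs on the sphere only when $\tilde r=r_0$, at $s=-1$) is a single point where $\tau$ is undefined and is harmless since you only lower-bound the weight; and the $\tilde r=0$ case, where the Jacobian degenerates, reduces to the explicit integral $2\pi\int_{-1}^1((1+s)r_0^\ga+1)^{-p}ds\les r_0^{-\ga}/(p-1)$, exactly as you indicate.
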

\begin{proof}
During the proof we also let the implicit constant in $\les$ rely on $\ga$.
Denote $s=-\om_0\cdot \tilde{\om}$ and $\om_0=r_0^{-1}x_0$. By definition, we have
  \begin{align*}
    &r^2=|x_0+\tilde{x}|^2=\tilde{r}^2+r_0^2-2r_0\tilde{r}s=(\tilde{r}-r_0s)^2+(1-s^2)r_0^2,\\
    &(1+\tau)r=r+r\om\cdot \tilde{\om}=r+(\tilde{x}+x_0)\cdot \tilde{\om}=r+\tilde{r}-r_0s.
  \end{align*}
We can write the integral as
  \begin{align*}
    &\int_{\S_{(t_0-\tilde{r}, x_0)}(\tilde{r})} ((1+\tau)r^{\ga}+1)^{-p} d\tilde{\om} =2\pi\int_{-1}^1 (r^{\ga-1}(r+\tilde{r}-r_0s)+1)^{-p}ds.
  \end{align*}
First we consider the case when the point $q$ locates in the exterior region, that is, $t_0\leq r_0$. The case when $t_0+r_0\leq 10$ is trivial. Thus in the following we always assume that $t_0+r_0\geq 10$. In particular in the exterior region, $r_0\geq 5$.
For the integral on $s\leq 0$, we trivially bound that
  \begin{align*}
    r^{\ga-1}(r+\tilde{r}-r_0s)\geq r_0^{\ga}.
  \end{align*}
  Therefore we can estimate that
  \begin{align*}
    \int_{-1}^0  (r^{\ga-1}(r+\tilde{r}-r_0s)+1)^{-p}ds\leq r_0^{-p\ga}.
  \end{align*}
  Define $s_0=1-(1-\tilde{r}r_0^{-1})^2$. On the interval $[0, s_0]$, note that
  \begin{align*}
    \sqrt{1-s} \ r_0\geq r_0-\tilde{r}.
  \end{align*}
Since $\tilde{r}\leq t_0\leq r_0$ and $0\leq s\leq s_0\leq 1$, we have
  \begin{align*}
    \tilde{r}-r_0s \leq r_0(1-s)\leq r_0\sqrt{1-s},\quad r_0s-\tilde{r}\leq r_0-\tilde{r}\leq r_0\sqrt{1-s}.
  \end{align*}
Define the relation $\sim $ meaning that two quantities are of the same size up to some universal constant, that is, $A\sim B$ means $C^{-1}B\leq A\leq CB$ for some constant $C$. The above computation shows that for $0\leq s\leq s_0$
\begin{align*}
  r\sim |\tilde{r}-r_0s|+\sqrt{1-s^2}r_0\sim \sqrt{1-s}r_0.
\end{align*}
Moreover when $\tilde{r}\geq r_0s$, it trivially has
\begin{align*}
  r+\tilde{r}-r_0s\sim \sqrt{1-s}r_0.
\end{align*}
Otherwise by writing
\begin{align*}
  r+\tilde{r}-r_0s=\frac{r^2-(\tilde{r}-r_0s)^2}{r+r_0s-\tilde{r}}=\frac{(1-s^2)r_0^2}{r+r_0s-\tilde{r}}\sim\frac{(1-s^2)r_0^2}{\sqrt{1-s}r_0}\sim \sqrt{1-s}r_0.
\end{align*}
  Therefore on the interval $[0, s_0]$, we can estimate that
  \begin{align*}
    \int_{0}^{s_0}  (r^{\ga-1}(r+\tilde{r}-r_0s)+1)^{-p} ds
    &\les  \int_{0}^{s_0}(\sqrt{1-s} r_0)^{-p\ga} ds \les  r_0^{-p\ga}.
  \end{align*}
  Here we may note that $p\ga < p(p-1)\leq 2$.

  Finally on the interval $[s_0, 1]$, notice that
  \begin{align*}
    r_0s\geq r_0s_0\geq \tilde{r},\quad \sqrt{1-s}r_0\leq r_0-\tilde{r}.
  \end{align*}
  Therefore we have
  \begin{align*}
    r\sim r_0s-\tilde{r}+\sqrt{1-s}r_0=r_0-\tilde{r}+(\sqrt{1-s}-(1-s))r_0\sim r_0-\tilde{r}.
  \end{align*}
  Hence we can estimate that
  \begin{align*}
   r(1+\tau)= r+\tilde{r}-r_0s=\frac{(1-s^2)r_0^2}{r+r_0s-\tilde{r}}\sim \frac{(1-s)r_0^2}{r_0-\tilde{r}}.
  \end{align*}
This leads to the bound that
  \begin{align*}
    &\int_{s_0}^{1}  (r^{\ga-1}(r+\tilde{r}-r_0s)+1)^{-p} ds\\
    &\les  \int_{s_0}^{1}\big(1+ (r_0-\tilde{r})^{\ga-2}(1-s)r_0^2\big)^{-p} ds\\
    &\les (r_0-\tilde{r})^{2-\ga} r_0^{-2} .
  \end{align*}
  Combining the above estimates, we have shown that in the exterior region $\{t_0\leq |x_0|\}$
  \begin{align*}
    \int_{\S_{(t_0-\tilde{r}, x_0)}(\tilde{r})} ((1+\tau)r^{\ga}+1)^{-p} d\tilde{\om} \les r_0^{-p\ga}+(r_0-\tilde{r})^{2-\ga} r_0^{-2}\les (1+r_0+\tilde{r})^{-\ga}.
  \end{align*}
This means that the Lemma holds for the case when $t_0\leq r_0$.

\bigskip

In the following, we consider the situation when $t_0>r_0$ and $t_0+r_0>10$. The integral on $[-1, 0]$ is easy to control. Indeed, when $s\leq 0$, by the expression of $r$ and $\tau$, we have
\begin{align*}
  r\sim \tilde{r}+r_0,\quad r(1+\tau)\sim \tilde{r}+r_0.
\end{align*}
Therefore, we can show that
\begin{align*}
  \int_{-1}^{0}(1+r^{\ga}(1+\tau))^{-p}ds\les \int_{-1}^{0}(1+(\tilde{r}+r_0)^{\ga})^{-p}ds\les (1+\tilde{r}+r_0)^{-p\ga}.
\end{align*}
For the integral on the interval $[0, 1]$, when $\tilde{r}\geq 2r_0$, we have
\begin{align*}
  r\sim \tilde{r}-r_0s+\sqrt{1-s}r_0\sim \tilde{r},\quad
  r+\tilde{r}-r_0s\sim \tilde{r}.
\end{align*}
Since $r_0\leq \f12 \tilde{r}$, we can show that
\begin{align*}
  \int_0^{1}(1+r^{\ga}(1+\tau))^{-p}ds\les \int_0^{1}(1+\tilde{r}^{\ga})^{-p}ds\les (1+\tilde{r}+r_0)^{-p\ga}.
\end{align*}
Now for the case when $r_0\leq \tilde{r}\leq 2 r_0$, similarly $r$ and $r(1+\tau)$ behave like
\begin{align*}
  &r\sim \tilde{r}-r_0s+\sqrt{1-s}r_0\sim \tilde{r}-r_0+\sqrt{1-s}r_0,\\
  & r\leq r(1+\tau)=r+\tilde{r}-r_0s\leq 2r.
\end{align*}
This shows that
\begin{align*}
  \int_0^1 (1+r^{\ga}(1+\tau))^{-p}ds\les \int_0^{1}(1+\sqrt{1-s}r_0)^{-p\ga}ds\les (1+\tilde{r}+r_0)^{-p\ga}.
\end{align*}
Again here we used the assumption that $p\ga<2$ and $\tilde{r}\leq 2r_0$.

Finally when $0\leq \tilde{r}\leq r_0$, we split the integral on $[0, 1]$ into several parts. On $[0, r_0^{-1}\tilde{r}]$, similar to the above case when $r_0\leq \tilde{r}\leq 2r_0$, we can estimate that
\begin{align*}
  \int_0^{r_0^{-1}\tilde{r}} (1+r^{\ga}(1+\tau)^{\ga})^{-p}ds\les \int_0^{r_0^{-1}\tilde{r}}(1+\tilde{r}-r_0+\sqrt{1-s}r_0)^{-p\ga}ds\les (1+\tilde{r}+r_0)^{-p\ga}.
\end{align*}
On $[r_0^{-1}\tilde{r}, 1]$, firstly we have
\begin{align*}
  r&\sim r_0s-\tilde{r}+\sqrt{1-s}r_0,\\
   r(1+\tau)&=\frac{(1-s^2)r_0^2}{r+r_0s-\tilde{r}} \sim\frac{(1-s) r_0^2}{r_0s-\tilde{r}+\sqrt{1-s}r_0}.
\end{align*}
Therefore, we can bound that
\begin{align*}
  \int_{r_0^{-1}\tilde{r}}^{1} (1+r^{\ga}(1+\tau))^{-p}ds \les \int_{r_0^{-1}\tilde{r}}^{1}(1+(r_0s-\tilde{r}+\sqrt{1-s}r_0)^{\ga-2}(1-s)r_0^2)^{-p}ds.
\end{align*}
Notice that
\begin{align*}
\frac{1}{2}(r_0-\tilde{r}+\sqrt{1-s} r_0)\leq r_0s-\tilde{r}+\sqrt{1-s}r_0\leq r_0-\tilde{r}+\sqrt{1-s}r_0,\quad r_0^{-1}\tilde{r}\leq s\leq 1.
\end{align*}
Denote $s_*=1-(1-r_0^{-1}\tilde{r})^2$. In particular, we have
\begin{align*}
  r_0^{-1}\tilde{r}\leq s_*\leq 1.
\end{align*}
On the interval $[s_*, 1]$, we have
\begin{align*}
  \sqrt{1-s}r_0\leq r_0-\tilde{r}.
\end{align*}
Therefore we show that
\begin{align*}
  \int_{s_*}^{1}(1+(1+\tau)r^{\ga})^{-p}ds &\les \int_{s_*}^1(1+(r_0-\tilde{r})^{\ga-2}(1-s)r_0^2)^{-p}ds\\
  &\les (r_0-\tilde{r})^{2-\ga}r_0^{-2}(1-(1+(r_0-\tilde{r})^{\ga})^{1-p})\\
  &\les (1+r_0)^{-\ga}\\
  &\les (1+r_0+\tilde{r})^{-\ga}.
\end{align*}
Otherwise on $[r_0^{-1}\tilde{r}, s_*]$, we can bound that
\begin{align*}
  \int_{r_0^{-1}\tilde{r}}^{s_*}(1+(1+\tau)r^{\ga})^{-p}ds &\les \int^{s_*}_{r_0^{-1}\tilde{r}}(1+(1-s)^{\f12\ga}r_0^{\ga})^{-p}ds\\
  &\les \int^{s_*}_{r_0^{-1}\tilde{r}}(1+(1-s) r_0^{2})^{-\f12 p\ga}ds \\
  &\les r_0^{-2}(1+r_0(r_0-\tilde{r}))^{1-\f12 p\ga }\\
  &\les  (1+\tilde{r}+r_0)^{-p\ga}.
\end{align*}
Here keep in mind that $\tilde{r}\leq r_0$.
The Lemma holds by combining all the above bounds.
\end{proof}

We are now ready to prove the main Theorem \ref{thm:main}.
The proof for the uniform boundedness of solution to \eqref{eq:NLW:semi:3d} relies on the representation formula for linear wave equations. The nonlinearity will be controlled by using the weighted energy estimates in Proposition \ref{prop:EF:cone:NW:3d:smallp}. Note that for $q=(t_0, x_0)$, we have
\begin{equation}
\label{eq:rep4phi:ex}
\begin{split}
4\pi\phi(t_0, x_0)&=\int_{\tilde{\om}}t_0  \phi_1(x_0+t_0\tilde{\om})d\tilde{\om}+\pa_{t_0}\big(\int_{\tilde{\om}}t_0  \phi_0(x_0+t_0\tilde{\om})d\tilde{\om}   \big)-\int_{\mathcal{N}^{-}(q)}|\phi|^{p-1} \phi \ \tilde{r} d\tilde{r}d\tilde{\om}.
\end{split}
\end{equation}
The linear evolution part is uniformly bounded
\begin{align*}
  |\int_{\tilde{\om}}t_0  \phi_1(x_0+t_0\tilde{\om})d\tilde{\om}+\pa_{t_0}\big(\int_{\tilde{\om}}t_0  \phi_0(x_0+t_0\tilde{\om})d\tilde{\om}   \big)|
  &\les \sqrt{\mathcal{E}_{1, 0}[\phi] }.
\end{align*}
To control the nonlinearity, in view of Lemma \ref{lem:integration:smallp} and Proposition \ref{prop:EF:cone:NW:3d:smallp}, we bound that
\begin{align*}
  &|\int_{\mathcal{N}^{-}(q) }|\phi|^{p-1}\phi \ \tilde{r} d\tilde{r}d\tilde{\om}|\\
  &\les \left(\int_{\mathcal{N}^{-}(q)}((1+\tau)r^{\ga}+1)|\phi|^{p+1}\ \tilde{r}^{2} d\tilde{r}d\tilde{\om}\right)^{\frac{p}{p+1}}  \cdot \left(\int_{\mathcal{N}^{-}(q) }((1+\tau)r^{\ga}+1)^{-p} \tilde{r}^{1-p} d\tilde{r}d\tilde{\om}\right)^{\frac{1}{p+1}}\\
  &\les (\mathcal{E}_{0, \ga}[\phi])^{\frac{p}{p+1}} \left(\int_{0}^{t_0}  (1+\tilde{r})^{-\ga} \tilde{r}^{1-p} d\tilde{r} \right)^{\frac{1}{p+1}}\\
  &\les (\mathcal{E}_{0, p-1}[\phi])^{\frac{p}{p+1}} \left(1+(1+t_0)^{2-p-\ga}\right)^{\frac{1}{p+1}}\\
  &\les (\mathcal{E}_{0, p-1}[\phi])^{\frac{p}{p+1}}
\end{align*}
by choosing $\ga=2-p+\ep$ such that $0<\ep<2p-3$. This shows that
\begin{align*}
  |\phi|\les \mathcal{E}_{1, 0}[\phi]+(\mathcal{E}_{0, p-1}[\phi])^{\frac{p}{p+1}}.
\end{align*}
By our definition, the implicit constant relies only on $p$. Hence the uniform boundedness of the main Theorem \ref{thm:main} follows.

\bibliography{shiwu}{}
\bibliographystyle{plain}

\bigskip

Beijing International Center for Mathematical Research, Peking University,
Beijing, China

\textsl{Email}: shiwuyang@math.pku.edu.cn

 \end{document}